\documentclass[11pt]{article}
\usepackage{amsmath,amsfonts,amssymb,amsthm,bm,mathtools}
\usepackage[hidelinks]{hyperref}
\setlength{\marginparwidth}{2cm}
\usepackage[dvipsnames]{xcolor}
\usepackage[color=Purple!50!white]{todonotes}
\usepackage[capitalize]{cleveref}
\usepackage{fullpage}
% \usepackage{titlesec}
% \titlespacing{\section}{0pt}{*2}{*1}

\title{An improved lower bound on multicolor Ramsey numbers}
\author{Yuval Wigderson\thanks{Department of Mathematics, Stanford University, Stanford, CA 94305, USA. Email: \url{yuvalwig@stanford.edu}. Research supported by NSF GRFP Grant DGE-1656518.}}

\newtheorem{thm}{Theorem}
\newtheorem*{thm*}{Theorem}
\newtheorem{lem}[thm]{Lemma}

\theoremstyle{definition}

\newtheorem*{rem}{Remark}
\crefname{equation}{Equation}{Equations}
\crefname{thm}{Theorem}{Theorems}

\newcommand\F{\mathbb{F}}

\newcommand\ab[1]{\lvert #1 \rvert}
\DeclareMathOperator\pr{Pr}

\begin{document}
\maketitle
\begin{abstract}
	A recent breakthrough of Conlon and Ferber yielded an exponential improvement on the lower bounds for multicolor diagonal Ramsey numbers. In this note, we modify their construction and obtain improved bounds for more than three colors. 
\end{abstract}
\section{Introduction}
For positive integers $t$ and $\ell$, let $r(t;\ell)$ denote the $\ell$-color Ramsey number of $K_t$, i.e.\ the least integer $N$ such that every $\ell$-coloring of $E(K_N)$ contains a monochromatic $K_t$. The most well-studied case is that of $\ell=2$, where the bounds
\[
	2^{t/2} \leq r(t;2) \leq 2^{2t}
\]
were proved by Erd\H os \cite{Erdos} and Erd\H os--Szekeres \cite{ErSz} in 1947 and 1935, respectively. Despite decades of effort, only lower-order improvements have been made to these bounds \cite{Spencer,Conlon,Sah}.

For larger values of $\ell$, even less is known. The Erd\H os--Szekeres \cite{ErSz} argument yields that $r(t;\ell) \leq \ell^{\ell t}$. For the lower bound, Erd\H os's random construction \cite{Erdos} shows that $r(t;\ell) \geq \ell^{t/2}$. This was improved substantially by Lefmann \cite{Lefmann}, who used an iterated product coloring to show that $r(t;\ell) \geq 2^{t\ell/4}$. Thus, we see that the dependence on the clique size $t$ is exponential, and the dependence on the number of colors $\ell$ is somewhere between exponential and super-exponential, i.e.\ between $2^{\Omega(\ell)}$ and $2^{O(\ell \log \ell)}$. It is a major open problem to determine the correct $\ell$-dependence. Already for the case $t=3$, Erd\H os offered \$100 for the determination of whether $r(3;\ell)$ is exponential or super-exponential in $\ell$, and this question is closely related to a number of other questions in graph theory, coding theory, and beyond; see e.g.\ \cite{Alon,NeRo} for more.

In a recent breakthrough, Conlon and Ferber \cite{CoFe} improved Lefmann's lower bound on $r(t;\ell)$ for fixed $\ell>2$ and $t \to \infty$. To do so, they introduced a new construction that mixes algebraic and probabilistic approaches, and which does better than the random construction for $\ell=3$ and $\ell=4$. Then, they use Lefmann's iterated product trick to obtain better bounds for all larger values of $\ell$ as well. Their result is that
\[
	r(t;\ell) \geq \left( 2^{\frac{7\ell}{24}+C} \right) ^{t-o(t)},
\]
for some constant $C$ that depends only on the residue of $\ell$ modulo 3. In this note, we use a variant of the Conlon--Ferber construction to improve the lower bounds on $r(t;\ell)$ for fixed $\ell$ and large $t$.
\begin{thm}\label{thm:high-level}
	For any fixed $\ell \geq 2$,
	\[
		r(t;\ell) \geq \left( 2^{\frac{3\ell}{8}- \frac 14} \right) ^{t-o(t)}.
	\]
\end{thm}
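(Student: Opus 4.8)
The plan is to improve the base case of the Conlon--Ferber construction for a fixed small number of colors and then lift it to all $\ell$ via Lefmann's product coloring. The algebraic heart, which I would retain from \cite{CoFe}, is the following observation over $\F_2$. Taking the vertex set to be $\F_2^n$ and writing $Q(z) = \langle z,z\rangle$ for the standard quadratic form, the identity $Q(x-y) = Q(x) + Q(y)$ holds over $\F_2$ (since $z_i^2 = z_i$ and $x-y = x+y$). Hence if one color class consists of all pairs $\{x,y\}$ with $Q(x-y) = 1$, then a monochromatic triangle $x_1,x_2,x_3$ in that color would force $Q(x_i)+Q(x_j)=1$ for all three pairs, which is impossible among three elements of $\F_2$. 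Thus this single algebraic color is triangle-free and contains no $K_t$ for any $t \ge 3$, essentially for free.

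The pairs with $Q(x-y) = 0$ must then be colored with the remaining colors, and here I would run a first-moment argument followed by deletion. The expected number of monochromatic $K_t$ is at most the number of \emph{admissible} $t$-cliques---those all of whose pairwise differences satisfy $Q(x-y)=0$---times the probability that such a clique is assigned a single color. The entire improvement lives in bounding the number of admissible cliques: rather than the trivial count $\binom{N}{t}$, I would use a linear-algebraic estimate showing that a $t$-set which is monochromatic under the refined coloring is confined to a low-dimensional (isotropic) affine subspace, so that the number of such $t$-sets is exponentially smaller than $\binom{N}{t}$. Feeding this back into the first moment lets one take $n$, and hence $N = 2^n$, substantially larger than the purely random bound would permit for the same forbidden clique size $t$.

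The modification of the Conlon--Ferber scheme that I expect to yield per-color rate $3/8$ in place of $7/24$ is to make the non-algebraic part of the coloring more efficient---for instance by replacing the uniform random coloring of the even pairs by a partially algebraic one, covering more pairs with cheaply controlled classes before resorting to genuinely random colors, thereby shrinking the exponent in the admissible-clique count. After a deletion step removing one vertex from each surviving monochromatic $K_t$, this produces a good coloring of $K_{N/2}$ for the improved base block. I would then invoke Lefmann's iterated product construction \cite{Lefmann}: combining colorings of $K_{N_1}$ and $K_{N_2}$ with $\ell_1$ and $\ell_2$ colors yields one of $K_{N_1 N_2}$ with $\ell_1+\ell_2$ colors and no larger monochromatic clique, so that $r(t;\ell_1+\ell_2) \ge r(t;\ell_1)\,r(t;\ell_2)$ up to lower-order terms. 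Optimizing over how to partition $\ell$ into base blocks---the random $2$-coloring, the Conlon--Ferber $3$-color block, and the improved block---should reproduce the known bounds $2^{t/2}$ at $\ell=2$ and $2^{7t/8}$ at $\ell=3$ (both consistent with $3\ell/8 - 1/4$) and beat them for $\ell \ge 4$, giving $r(t;\ell) \ge 2^{(3\ell/8 - 1/4)(t - o(t))}$.

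The main obstacle is the admissible-clique count. Driving its exponent down to exactly what is needed to support $n \approx (3\ell/8 - \tfrac14)t$ requires a tight bound on the number of $t$-sets that can be monochromatic, and then a careful optimization balancing the number of algebraic versus random colors, the dimension $n$, and the product-trick partition of $\ell$. Verifying that the improved block genuinely achieves per-color rate $3/8$, and that the partition optimization produces the clean additive constant $-\tfrac14$ uniformly in the residue of $\ell$, is where the real work lies; the $o(t)$ term absorbs the $\mathrm{poly}(t)$ factors from $t!$ and the deletion loss.
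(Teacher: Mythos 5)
Your route cannot reach the stated bound, and the obstruction is structural rather than a matter of careful optimization. Lefmann's product trick makes exponents \emph{add}: if you tile $\ell$ colors by blocks of $\ell_1,\dots,\ell_k$ colors whose colorings achieve exponents $c_1t,\dots,c_kt$, the product coloring achieves exponent $(c_1+\cdots+c_k)t$. Every block you could use --- the random $2$-coloring ($c=1/2$), the Conlon--Ferber $3$-color block ($c=7/8$), or even a hypothetical ``improved block'' achieving the bound of the theorem itself --- satisfies $c_i \le 3\ell_i/8 - 1/4$, the deficit $1/4$ being paid by the non-algebraic colors in each block. Hence $k$ blocks yield at most $3\ell/8 - k/4$: two blocks already give $3\ell/8 - 1/2$, and since covering all large $\ell$ by blocks of bounded size $L$ forces $k \ge \ell/L$, the product route caps the per-color rate at $3/8 - 1/(4L)$, strictly below $3/8$. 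So no partition optimization can produce the uniform constant $-1/4$; that constant can appear only if it is paid \emph{once}, which requires a single construction whose number of colors grows with $\ell$. This is precisely the missing idea, and it is the point of the paper (which stresses that it uses no product coloring at all): fix $m=\ell-2$ \emph{independent} uniformly random maps $f_1,\dots,f_m:[N]\to V$, where $G_0$ is the Conlon--Ferber graph on the even-weight vectors $V\subset \F_2^t$ with adjacency $u\cdot v=1$; color $\{x,y\}$ by the least $i$ with $\{f_i(x),f_i(y)\}\in E(G_0)$, and color the leftover pairs uniformly at random with two fresh colors. Each of the $m$ blowup colors is $K_t$-free deterministically (Oddtown), and for a fixed $t$-set $S$ the events ``$f_i(S)$ is independent in $G_0$'' are independent over $i$, each of probability at most $2^{5t^2/8+o(t^2)}\cdot(t/2^{t-1})^t = 2^{-3t^2/8+o(t^2)}$ by the independent-set count (\cref{lem:ind-number-bound}). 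Thus $S$ is monochromatic with probability at most $2^{1-\binom t2}\cdot 2^{-3mt^2/8+o(t^2)}$, and the union bound over $\binom Nt$ sets succeeds at $N=2^{(3m/8+1/2)t-o(t)}$, i.e.\ exponent $3\ell/8-1/4$, with the deficit incurred once rather than once per block.

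Separately, your ``algebraic heart'' is not the Conlon--Ferber construction and would not support the counting you need. Since $Q(x+y)=Q(x)+Q(y)$ over $\F_2$, the color class $\{Q(x-y)=1\}$ is exactly the bipartite graph between even- and odd-weight vectors: triangle-free, but trivially so, and useless here, because the ``admissible'' pairs with $Q(x-y)=0$ then include \emph{all} pairs inside the even-weight half. An admissible $t$-set is just any $t$-set within one parity class, so there are at least $\binom{2^{n-1}}{t}$ of them --- no exponential saving and no confinement to low-dimensional isotropic affine subspaces. The construction that works restricts the vertex set to the even-weight (isotropic) vectors and joins $u,v$ when $u\cdot v=1$: its $K_t$-freeness is the Oddtown argument (clique vertices are linearly independent while $\dim V = t-1$, see \cref{lem:oddtown}), and the workhorse is the bound $2^{5t^2/8+o(t^2)}$ on the number of independent sets of size at most $t$, which is where the per-color rate $3/8$ actually comes from.
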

Theorem \ref{thm:high-level} gives the best known bound for all $\ell \geq 4$, and for large $\ell$, improves the constant in the exponent by roughly a factor of $9/7$. It is interesting to note that for our bound, we do not use a product coloring at all, and instead obtain the bound in \cref{thm:high-level} directly from the construction. The bound also matches the best known exponential constant for $\ell=2$ (due to Erd\H{o}s \cite{Erdos}) and for $\ell=3$ (due to Conlon and Ferber \cite{CoFe}). This is because our construction specializes for $\ell=2,3$ to these earlier constructions.

At a high level, our construction differs from the Conlon--Ferber construction by replacing their random induced subgraph by a number of independent random blowups. Such an approach to proving lower bounds for multicolor Ramsey problems goes back to work of Alon and R\"odl \cite{AlRo}. Moreover, it was observed in \cite{HeWi}, combining ideas of Alon--R\"odl with those of Mubayi--Verstra\"ete \cite{MuVe}, that for such problems random induced subgraphs and random blowups are closely related, and are both part of a more general framework of random homomorphisms.

\section{Proof of Theorem \ref{thm:high-level}}

We begin by recalling the basics of the Conlon--Ferber construction, in the special case of $q=2$. Let $t$ be even and let $V \subset \F_2^t$ denote the set of vectors of even Hamming weight, so that $\ab V=2^{t-1}$. We define a graph $G_0$ with vertex set $V$ by letting $\{u,v\} \in E(G_0)$ if and only if $u \cdot v=1$, where $u \cdot v = \sum_{i=1}^t u_i v_i$ denotes the scalar product over $\F_2$. 

\begin{lem}[Conlon--Ferber \cite{CoFe}]\label{lem:oddtown}
	$G_0$ has no clique of order $t$.
\end{lem}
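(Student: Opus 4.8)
The plan is to argue by contradiction using a linear-algebraic (``oddtown''-style) rank argument over $\F_2$. Suppose $v_1, \dots, v_t \in V$ form a clique of order $t$ in $G_0$. I would first record the two families of inner-product relations that this forces. Since each $v_i$ lies in $V$ and hence has even Hamming weight, $v_i \cdot v_i = \sum_k (v_i)_k^2 = \sum_k (v_i)_k = 0$ over $\F_2$. Since the $v_i$ are pairwise adjacent, the defining relation of $G_0$ gives $v_i \cdot v_j = 1$ for all $i \neq j$. Assembling these into the Gram matrix $M \in \F_2^{t\times t}$ with $M_{ij} = v_i \cdot v_j$, we obtain exactly $M = J - I$, where $J$ is the all-ones matrix and $I$ the identity.

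The next step is to show that $M = J - I$ is invertible over $\F_2$, and this is where the hypothesis that $t$ is even enters. I would compute $(J-I)^2 = J^2 - 2J + I$; over $\F_2$ the cross term vanishes, and since every entry of $J^2$ equals $t \equiv 0 \pmod 2$, we get $J^2 = 0$, hence $(J-I)^2 = I$. Thus $J - I$ is its own inverse and in particular has full rank $t$. (Equivalently, one can note $\det(J-I) = (-1)^{t-1}(t-1) \equiv 1 \pmod 2$.)

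Now write $A \in \F_2^{t \times t}$ for the matrix whose rows are $v_1, \dots, v_t$, so that $M = A A^{\!\top}$. Since the column space of $A A^{\!\top}$ is contained in that of $A$, we have $t = \operatorname{rank}(M) \leq \operatorname{rank}(A) \leq t$, forcing $\operatorname{rank}(A) = t$; hence the vectors $v_1, \dots, v_t$ span all of $\F_2^t$. To finish, I would exploit the even-weight condition once more: every $v_i$ is orthogonal to the all-ones vector $\mathbf{1}$, because $\mathbf{1} \cdot v_i$ is just the parity of the weight of $v_i$. A vector orthogonal to a spanning set is orthogonal to the entire space, and the standard bilinear form on $\F_2^t$ is non-degenerate, so this forces $\mathbf{1} = 0$, a contradiction since $t \geq 2$.

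I expect the only genuinely delicate point to be the invertibility computation and its dependence on the parity of $t$: it is precisely the evenness of $t$ that makes $J - I$ nonsingular over $\F_2$ (for odd $t$ the matrix $J - I$ is singular and the argument breaks down), so I would be careful to flag exactly where this hypothesis is used. Everything else is routine bookkeeping with inner products over $\F_2$.
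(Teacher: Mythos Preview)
Your proof is correct and follows essentially the same oddtown-style route as the paper: both show that a putative $t$-clique would consist of $t$ linearly independent vectors (the paper leaves this as ``simple to show,'' while you spell out the Gram-matrix computation $M=J-I$ and its invertibility for even $t$), and then derive a contradiction from the fact that $V=\mathbf{1}^\perp$ has dimension $t-1$. Your final step, phrased as ``$\mathbf{1}=0$,'' is exactly the paper's ``$\dim V=t-1$'' contradiction said differently.
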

\begin{proof}
	This is a simple variant of the Oddtown theorem \cite{Berlekamp}. Since $V$ consists of vectors of even Hamming weight, we see that $v \cdot v=0$ for all $v \in V$. Therefore, it is simple to show that every clique in $G_0$ of order $t$ consists of linearly independent vectors, since $t$ is even. Since $\dim V=t-1$, this gives the desired result. 
\end{proof}

\begin{lem}[Conlon--Ferber \cite{CoFe}]\label{lem:ind-number-bound}
	$G_0$ has at most $2^{\frac{5t^2}8+o(t^2)}$ independent sets of order at most $t$.
\end{lem}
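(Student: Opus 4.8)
The plan is to count independent sets by their size and bound each contribution separately. An independent set in $G_0$ is a set of vectors $S \subseteq V$ that is pairwise orthogonal: $u \cdot v = 0$ for all distinct $u,v \in S$. Since every $v \in V$ also satisfies $v \cdot v = 0$ (even Hamming weight), an independent set is exactly a set of vectors spanning a totally isotropic subspace of $\F_2^t$ under the standard bilinear form. The key structural fact I would exploit is that such a set lives inside a subspace on which the form vanishes identically, and the dimension of any totally isotropic subspace of $\F_2^t$ is at most $t/2$. So although independent sets may have up to $t$ vertices, the vectors in any independent set span a space of dimension at most $t/2$.

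First I would stratify: let $I_k$ denote the number of independent sets of size exactly $k$, so the quantity to bound is $\sum_{k=0}^{t} I_k$, and it suffices to show each $I_k \le 2^{5t^2/8 + o(t^2)}$, since there are only $t+1 = 2^{o(t^2)}$ terms. To bound $I_k$, I would build an independent set greedily, choosing its vectors $v_1, \dots, v_k$ one at a time. Having chosen $v_1, \dots, v_{j-1}$ spanning a space $W_{j-1}$ of some dimension $d_{j-1} \le j-1$, the next vector $v_j$ must be orthogonal to all of them, i.e. it lies in $W_{j-1}^\perp$, which has dimension $t - d_{j-1} \ge t - (j-1)$. Hence there are at most $2^{t-j+1}$ choices for $v_j$. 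Multiplying over $j = 1, \dots, k$ (and dividing by $k!$ for the ordering, which only helps) gives a crude bound, but the critical improvement comes from using the isotropy constraint to cap the dimension at $t/2$.

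The main obstacle, and where the constant $5/8$ comes from, is getting the exponent exactly right rather than the naive $\sum_{j=1}^{t}(t-j+1) \approx t^2/2$. The refinement is to split the construction at the point where the spanned subspace reaches dimension $t/2$: once $d_{j-1} = t/2$, the orthogonal complement $W_{j-1}^\perp$ still has dimension $t/2$, but since $W_{j-1}$ is now a maximal isotropic subspace, any further vector $v_j$ that extends the independent set must actually lie in $W_{j-1}$ itself (a vector orthogonal to a maximal isotropic space and to itself lies in that space), contributing at most $2^{t/2}$ choices but adding no new dimension. So I would bound the count in two phases: choosing a basis for the spanned isotropic subspace $W$ (dimension up to $t/2$), which costs roughly $\prod_{i=0}^{t/2-1} 2^{t-i} = 2^{(3/8)t^2 + o(t^2)}$, and then choosing the remaining up to $t$ vectors from within $W$, which costs at most $(2^{t/2})^{t} = 2^{t^2/2}$ — wait, this must be reconciled, so the actual argument tracks the trade-off more carefully to land on $5/8$.

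Concretely, I expect the clean formulation is: every independent set of size at most $t$ is determined by its spanned isotropic subspace $W$ together with a choice of which vectors of $W$ to include, but since $\dim W \le t/2$ we have $|W| \le 2^{t/2}$ and the number of subsets of $W$ of size at most $t$ is at most $2^{O(t \cdot \log(2^{t/2}))} = 2^{O(t^2)}$ with the right constant. Counting the number of isotropic subspaces of each dimension $d \le t/2$ via the greedy basis bound $2^{dt - \binom{d}{2}}$ and then multiplying by the $\binom{2^d}{\le t}$ ways to select at most $t$ elements, I would optimize over $d$; the dominant term balancing subspace-count against element-selection should occur near $d = t/2$ and yield the target exponent $5/8$. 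The delicate part is verifying that this optimization genuinely produces $5t^2/8$ and not something larger, so I would set up the exponent as a function of $d$ and confirm its maximum, treating lower-order factors like $t!$, $\binom{t}{\cdot}$, and the $t+1$ size classes as absorbed into the $o(t^2)$ term.
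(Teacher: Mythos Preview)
The paper does not supply its own proof here; it cites Conlon--Ferber directly. Your overall strategy---observing that any independent set spans a totally isotropic subspace of $\F_2^t$, hence of dimension at most $t/2$, and then counting by first building this subspace and then filling it in---is exactly the Conlon--Ferber argument.

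The gap is the one you flagged with ``wait, this must be reconciled'' but did not actually fix: as written, your count produces $7t^2/8$, not $5t^2/8$, because you are double-counting the basis vectors. In the two-phase version, the $d\le t/2$ basis vectors chosen in phase one are already $d$ of the $t$ vectors of the independent set, so phase two only needs to select the \emph{remaining} $t-d$ vectors from $W$; at $d=t/2$ this costs $(2^{t/2})^{t/2}=2^{t^2/4}$, not $(2^{t/2})^{t}=2^{t^2/2}$, and then $3t^2/8+t^2/4=5t^2/8$ as required. Equivalently, in your ``concrete formulation,'' the quantity $2^{dt-\binom d2}$ bounds the number of \emph{ordered bases} of $d$-dimensional isotropic subspaces, not the number of such subspaces; dividing by the $\prod_{i=0}^{d-1}(2^d-2^i)=2^{d^2+O(d)}$ ordered bases per subspace gives at most $2^{dt-3d^2/2+o(t^2)}$ subspaces, and multiplying by $\binom{2^d}{t}\le 2^{dt}$ for the element selection yields $2^{2dt-3d^2/2+o(t^2)}$. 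This is maximized over $d\le t/2$ at the endpoint $d=t/2$ (the unconstrained optimum is at $d=2t/3$), with value $5t^2/8$. Once this correction is made, the remaining bookkeeping---summing over $d$, over $k\le t$, and over which positions form the basis, each contributing only $2^{o(t^2)}$---goes through exactly as you outline.
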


In their paper, Conlon and Ferber only state this bound for the number of independent sets of size exactly $t$, but their proof actually yields \cref{lem:ind-number-bound}.

We now fix a non-negative integer $m$, and define an $(m+2)$-coloring $\chi$ of $E(K_N)$ for every $N$. We will eventually take $N=2^{\frac{3mt}{8}+\frac t2-o(t)}$; in particular, one should think of $N$ as much larger than $\ab V = 2^{t-1}$. We pick $m$ uniformly random functions $f_1,\ldots,f_m:[N] \to V$, all independent of one another. For two distinct vertices $x,y \in [N]$, we define their color $\chi(x,y)$ as follows. First, if there is some index $i \in [m]$ such that $\{f_i(x),f_i(y)\} \in E(G_0)$, then we let $\chi(x,y)$ be the minimum such index $i$; note that in particular, $\chi(x,y)=i$ implies that $f_i(x) \neq f_i(y)$. If there is no such $i$, then we pick $\chi(x,y) \in \{m+1,m+2\}$ uniformly at random, with these choices made independently over all pairs $x,y$. 

In other words, the coloring of $K_N$ is obtained by overlaying $m$ random blowups of $G_0$ to $N$ vertices, and then randomly coloring all the remaining pairs with the two unused colors. We now claim that for an appropriate choice of $N$, this coloring will contain no monochromatic cliques of order $t$.

\begin{thm}\label{thm:main}
	For every non-negative integer $m$, if $N=2^{\frac{3mt}{8}+\frac t2-o(t)}$, then the coloring $\chi$ will contain no monochromatic clique of order $t$ with positive probability. In particular, $r(t;m+2) \geq 2^{\frac{3mt}{8}+\frac t2-o(t)}$.
\end{thm}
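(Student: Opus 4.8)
The plan is to combine a purely deterministic observation for the first $m$ colors with a first-moment argument for the final two. First I would dispose of colors $1,\dots,m$ \emph{deterministically}, independently of the random choices: if some $t$-set $S$ were monochromatic in a color $i \in [m]$, then every pair $x,y \in S$ would satisfy $\chi(x,y)=i$, which by definition forces $\{f_i(x),f_i(y)\} \in E(G_0)$, and in particular $f_i(x)\neq f_i(y)$. Hence $f_i$ would be injective on $S$ and $f_i(S)$ would be a clique of order $t$ in $G_0$, contradicting \cref{lem:oddtown}. Thus, regardless of the outcome of the $f_i$ and of the random coloring, no monochromatic $K_t$ can appear in any of the colors $1,\dots,m$.

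It remains to rule out monochromatic $K_t$'s in colors $m+1$ and $m+2$, which I would do by showing the expected number of these tends to $0$. Fix a $t$-set $S$. A pair $\{x,y\}$ is eligible to receive a color in $\{m+1,m+2\}$ exactly when $\{f_i(x),f_i(y)\} \notin E(G_0)$ for every $i \in [m]$, and, conditional on the $f_i$, the eligible pairs are colored independently and uniformly. Consequently, for $S$ to be monochromatic in (say) color $m+1$, all $\binom t2$ pairs must be eligible and must then all receive color $m+1$, giving
\[
  \pr[S \text{ monochromatic in } m+1] = 2^{-\binom t2}\,\pr[\text{every pair of } S \text{ is eligible}].
\]
Since $f_1,\dots,f_m$ are independent, the eligibility probability factors as the $m$-th power of the corresponding probability for a single $f_i$.

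The crux of the argument—and the step I expect to be the main obstacle—is estimating this single-function probability. For one $f_i$, the event that no pair of $S$ spans an edge of $G_0$ is precisely the event that the image $f_i(S)$ is an independent set of $G_0$, necessarily of order at most $t$ since $|S|=t$. I would bound the number of functions $S \to V$ with independent image by $\bigl(\#\{\text{independent sets of order} \le t\}\bigr)\cdot t^t$, the factor $t^t$ accounting for the assignment of the $t$ points of $S$ to their (at most $t$) image values. Invoking \cref{lem:ind-number-bound}, using $t^t = 2^{o(t^2)}$ and $|V|^t = 2^{t^2 - t}$, this yields a single-function probability of at most $2^{5t^2/8 - t^2 + o(t^2)} = 2^{-3t^2/8 + o(t^2)}$, and hence $\pr[\text{every pair eligible}] \le 2^{-3mt^2/8 + o(t^2)}$, where fixedness of $m$ absorbs the $m$-fold error into a single $o(t^2)$.

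Combining these pieces and summing over all $\binom Nt \le N^t$ choices of $S$ and over both colors, the expected number of monochromatic $K_t$'s in colors $m+1,m+2$ is at most
\[
  2 N^t \cdot 2^{-\binom t2}\cdot 2^{-3mt^2/8 + o(t^2)} = 2^{t\log_2 N - t^2/2 - 3mt^2/8 + o(t^2)}.
\]
With $\log_2 N = \tfrac{3mt}8 + \tfrac t2 - o(t)$ the leading quadratic terms cancel, and I would arrange the residual exponent to tend to $-\infty$ by choosing the $-o(t)$ slack in $N$ to slightly dominate the $o(t^2)/t$ error coming from the probability estimate. The first-moment method then gives positive probability that no monochromatic $K_t$ appears in colors $m+1,m+2$; together with the deterministic absence in colors $1,\dots,m$, this proves \cref{thm:main}, and the claimed bound $r(t;m+2) \ge 2^{\frac{3mt}8 + \frac t2 - o(t)}$ follows immediately. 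The genuinely delicate point is the bookkeeping of the $o(t^2)$ versus $o(t)$ error terms, which is exactly what the $-o(t)$ in the definition of $N$ is designed to accommodate.
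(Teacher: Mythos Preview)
Your proposal is correct and follows essentially the same route as the paper: deterministically rule out colors $1,\dots,m$ via \cref{lem:oddtown}, bound the single-function probability that $f_i(S)$ is independent by combining \cref{lem:ind-number-bound} with the crude count $t^t/|V|^t$ (the paper phrases this as $\pr(f_i(S)=T)\le(|T|/|V|)^t$ and then union-bounds over $T$, which is the same computation), use independence of the $f_i$ to take the $m$th power, multiply by $2^{1-\binom t2}$ for the random final two colors, and finish with a union bound over $\binom Nt$ sets. Your explicit remark that $f_i$ must be injective on $S$ and your discussion of the $o(t)$ slack in $N$ absorbing the $o(t^2)/t$ error are exactly the details the paper leaves implicit.
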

\begin{rem}
	By letting $m=\ell-2$, one obtains the bound in \cref{thm:high-level}.
\end{rem}
\begin{proof}
	We fix a set $S \subset [N]$ with $\ab S=t$, and will bound the probability that $S$ spans a monochromatic clique under $\chi$. First, we observe that $S$ cannot be a monochromatic clique in any of the first $m$ colors, since blowing up a graph cannot increase its clique number. More formally, if $S$ were a monochromatic clique in color $i \in [m]$, then the set of vertices $f_i(S) \subset V$ would form a clique in $G_0$ of order $t$, which cannot exist by \cref{lem:oddtown}.

	Now we bound the probability that $S$ is monochromatic in one of the last two colors. To do so, we will first compute the probability that no pair in $S$ receives one of the first $m$ colors, i.e.\ the probability that all the functions $f_1,\ldots,f_m$ map $S$ into an independent set of $G_0$. If $T$ is some independent set of $G_0$ with $\ab T\leq t$, then the probability that $f_i(S) \subseteq T$ is precisely $(\ab T/\ab V)^t$, since each vertex of $S$ has a $\ab T/\ab V$ chance of being mapped into $T$ by $f_i$. Therefore,
	\[
		\pr(f_i(S) = T) \leq \left( \frac{\ab T}{\ab V} \right) ^t\leq \left( \frac{t}{2^{t-1}} \right) ^t=2^{-t^2+o(t^2)}.
	\]
	By \cref{lem:ind-number-bound}, the number of choices for such a $T$ is at most $2^{5t^2/8+o(t^2)}$. Therefore, by the union bound, the probability that $f_i(S)$ is an independent set in $G_0$ is at most $2^{-3t^2/8+o(t^2)}$. Since these events are independent over all $i \in [m]$, we conclude that
	\[
		\pr(f_i(S)\text{ is independent in }G_0\text{ for all }i \in [m]) \leq 2^{-\frac{3mt^2}{8}+o(t^2)}.
	\]
	Now, for $S$ to be monochromatic in one of the last two colors, we must first have that $f_i(S)$ is independent in $G_0$ for all $i$, and then that all the pairs in $S$ receive the same color under the random assignment of the colors $m+1$ and $m+2$. In other words,
	\begin{align*}
		\pr(S\text{ is monochromatic}) &= 2^{1-\binom t2}\pr(f_i(S)\text{ is independent in }G_0\text{ for all }i \in [m])\\
		&\leq 2^{-\frac{t^2}{2}- \frac{3mt^2}{8}+o(t^2)}.
	\end{align*}
	Finally, we can apply the union bound over all choices of $S$, and conclude that
	\begin{align*}
		\pr(K_N\text{ has a monochromatic clique of order }t) &\leq \binom Nt 2^{-\frac{t^2}{2}- \frac{3mt^2}{8}+o(t^2)}\\
		&\leq \left( N 2^{-\frac{t}{2}-\frac{3mt}{8}+o(t)} \right) ^t\\
		&=o(1),
	\end{align*}
	by our choice of $N=2^{\frac{3mt}{8}+\frac t2-o(t)}$.
\end{proof}

For $m=1$, our construction is actually identical to the Conlon--Ferber construction, so of course yields their bound of $r(t;3) \geq 2^{7t/8-o(t)}$. However, already for four colors our construction starts doing better than theirs. Specifically, applying \cref{thm:main} to $m=2$, we conclude that
\[
	r(t;4) \geq 2^{\frac{6t}{8}+\frac{t}{2}-o(t)} = 2^{\frac{5t}{4}-o(t)} \approx 2.37^t,
\]
whereas their lower bound is roughly $2.13^t$. Additionally, one can check that our bound is stronger than the Conlon--Ferber bound for all $\ell \geq 4$. 

\paragraph{Acknowledgments.} I would like to thank David Conlon and Asaf Ferber for helpful comments, and Xiaoyu He for introducing me to the method of random homomorphisms. I am also extremely grateful to Jacob Fox for many insights and for carefully reading an earlier draft of this paper.

\end{document}